\documentclass[
12pt, 
a4paper, 
oneside, 
headinclude,footinclude, 
]{article}

\usepackage[
nochapters, 
beramono, 
eulermath,
pdfspacing, 
dottedtoc 
]{classicthesis} 

\usepackage{arsclassica} 

\usepackage[T1]{fontenc} 

\usepackage[utf8]{inputenc} 

\usepackage{graphicx} 

\usepackage{enumitem} 

\usepackage{amsmath,amssymb,amsthm} 

\usepackage{varioref} 

\hypersetup{
colorlinks=true, breaklinks=true, bookmarks=true,bookmarksnumbered,
urlcolor=webbrown, linkcolor=RoyalBlue, citecolor=webgreen, 
pdftitle={}, 
pdfauthor={\textcopyright}, 
pdfsubject={}, 
pdfkeywords={}, 
pdfcreator={pdfLaTeX}, 
pdfproducer={LaTeX with hyperref and ClassicThesis} 
} 

\usepackage{tikz}
\usetikzlibrary{decorations.pathreplacing, calligraphy}

\usepackage{hyperref}
\usepackage{bm, stmaryrd}											
\usepackage{a4wide}
\usepackage{mathdots}
\usepackage[english]{babel}
\usepackage{xcolor}
\usepackage{float}
\usepackage{bigints}
\usepackage{subcaption}
\usepackage{accents}

\allowdisplaybreaks

\newtheorem{thm}{Theorem}[section]

\newtheorem{prop}[thm]{Proposition}
\newtheorem{conj}[thm]{Conjecture}

\newtheorem{defi}[thm]{Definition}

\newcommand{\osc}{\detokenize{osc}}

\theoremstyle{definition}				

\numberwithin{equation}{section}

\newcommand{\R}{\mathbb{R}}

\newcommand{\N}{\mathbb{N}}
\newcommand{\Z}{\mathbb{Z}}

\newcommand{\dist}{\detokenize{dist}}

\title{\normalfont\spacedallcaps{Rational Points near Monofractal Curves and the Strong Oscillation Principle}} 

\author{\spacedlowsmallcaps{Faustin ADICEAM\textsuperscript{1} \& \spacedlowsmallcaps{Volodymyr PAVLENKOV\textsuperscript{2}} \& Evgeniy ZORIN\textsuperscript{3} } }

\date{} 

\begin{document}

\maketitle


\renewcommand{\sectionmark}[1]{\markright{\spacedlowsmallcaps{#1}}} 
\lehead{\mbox{\llap{\small\thepage\kern1em\color{halfgray} \vline}\color{halfgray}\hspace{0.5em}\rightmark\hfil}} 

\pagestyle{scrheadings} 

\begin{flushright}
\textit{Florian Luca (1969 -- 2026)\\ in memoriam\\}
\end{flushright} 


\begin{abstract} 
\noindent In their previous work devoted to the distribution of rational points near Brownian motion, the authors conjectured the existence of an \emph{oscillation principle} governing the asymptotic behavior of the number of rational points with bounded denomi\-nators near the graph of a monofractal curve. In this note, a weaker form of this conjecture is shown to hold for a broad class of deterministic fractal curves. These include the classical Takagi and Weierstrass nowhere differentiable functions, and indeed   a prevalent (i.e.~"large") class of functions among those which are  H\"older continuous with a given exponent of regularity. This constitutes the first instance of deterministic fractal curves for which a precise count of the rational points under consideration is established.
\end{abstract}


\let\thefootnote\relax\footnotetext{\textsuperscript{1} {Laboratoire d’analyse et de mathématiques appliquées (LAMA), Université Paris-Est Créteil, France,} \texttt{faustin.adiceam@u-pec.fr}}
\let\thefootnote\relax\footnotetext{\textsuperscript{2}Department of Mathematics, University of York,
Heslington, York, YO10 5DD, England, {\texttt{volodymr.pavlenkov@york.ac.uk}}.}
\let\thefootnote\relax\footnotetext{\textsuperscript{3} Department of Mathematics, University of York,
Heslington, York, YO10 5DD, England, \texttt{evgeniy.zorin@york.ac.uk}.\\}

\section{Introduction}

\noindent Fix throughout  a real-valued function  $f$ defined over a compact interval $I$ with nonempty interior. Given an integer $Q\ge 1$ and reals $\eta, \delta>0$, denote by 
\begin{equation}\label{tubnegh}
\mathfrak{I}_f(\eta)\;=\; \left\{\left(s,t\right)\in\R^2\; :\; \inf_{x\in I} \max\left\{\left| x-s\right|, \left|f(x)-t\right|\right\}\le \eta\right\}
\end{equation}
its $\eta$--tubular neighburhood (with respect to the sup norm) and by 
\begin{equation*}
\mathcal{N}_f(\delta, Q)\;=\; \#\left\{\left(p,q,r\right)\in\Z^3\; :\; 1\le q\le Q\quad\textrm{and}\quad \left(\frac{p}{q}, \frac{r}{q}\right)\in \mathfrak{I}_f\!\left(\frac{\delta}{Q}\right)\right\}
\end{equation*}
the number of rational points $\left(p/q, r/q\right)$ with denominator $q$ bounded by $Q$  (counted with multiplicity)  lying in the $(\delta/Q)$--neighbourhood of the curve defined by the map~$f$. Let also $\mathcal{A}_f(\eta)$ denote the area of the domain $\mathfrak{I}_f(\eta)$, whenever measurable.\\

\noindent The Area Heuristic predicts the existence of a (potentially infinite) exponent $\alpha>0$ for which 
\begin{equation*}
\mathcal{N}_f(\delta, Q)\;\asymp\; \mathcal{A}_f\!\left(\frac{\delta}{Q}\right)\cdot Q^3
\end{equation*}
as long as $\delta\gg Q^{-\alpha}$ (\textsuperscript{4})\let\thefootnote\relax\footnotetext{\textsuperscript{4}When $g,h$ are two real--valued, positive functions depending on a parameter $x$, the notations $f(x)\ll g(x)$ and $f(x)\gg g(x)$ mean the existence of constants $c, c'>0$ independent of the variable $x$  such that $f(x)\le c\cdot g(x)$ and $f(x)\ge c'\cdot g(x)$ for all $x$, respectively. The notation $f(x)\asymp g(x)$ means that the two relations $f(x)\ll g(x)$  and $f(x)\gg g(x)$ are simultaneously met.}. The justification of this heuristic and the state of the art regar\-ding its validity are thoroughly outlined in~\cite[\S1.1]{APZ}. It suffices to mention here that it is rather well--understood in the case of sufficiently regular curves. For non-differentiable ones however, the only known case for which the Area Heuristic has been established is the case of an almost sure realisation of Brownian motion. This has been achieved in~\cite[\S1.3]{APZ}. \\

\noindent To develop the theory   of the distribution of  rational points near irregular curves,  the authors conjecture in~\cite[\S1.4]{APZ}  the existence of a so--called \emph{Oscillation Principle} governing the asymptotic behavior of the counting function under consideration in the case of \emph{monofractal} curves. In order to state this principle, recall that the local H\"older exponent of the function $f$ at a point $x\in I$ is defined as
\begin{equation}\label{holdr}
h_f(x)\;=\; \liminf_{\eta\rightarrow 0^+}\; \frac{\log \omega_f(x,\eta)}{\log \eta},
\end{equation}
where 
\begin{equation}\label{defomega} 
 \omega_f(x,\eta)\;=\; \sup_{\stackrel{y\in I}{0<\left|x-y\right|\le \eta}} \left|f(x)-f(y)\right|
\end{equation}
is the oscillation of $f$ in a $\eta$--neighbourhood of $x\in I$ (conventionally, $h_f(x)=\infty$ if $f$ is locally constant around $x$). The map $f$ is then \emph{monofractal} if there exists $\gamma\in [0,1]$ such that 
\begin{equation}\label{monoexpo}
h_f(x)\;=\;\gamma \qquad \textrm{for all}\qquad x\in I.
\end{equation}

\noindent Its \emph{average oscillation} of order $k\ge 1$ over a subinterval  $J\subset I$  is the quantity
\begin{equation}\label{sfJk}
S_f\!\left(J,k\right)\;=\; \frac{1}{2^k}\sum_{j=1}^{2^k} \osc\!\left(f, J^{(j)}_{k}\right).
\end{equation}
Here, $J^{(1)}_{k}, \dots, J^{\left(2^k\right)}_{k}$ denote the partition of the interval $J$  into $2^k$ closed subintervals of equal length (up to boundary points). Furthermore, given a subinterval $\widetilde{J}\subset I$, one has set 
\begin{equation}\label{defosc}
\osc\!\left(f, \widetilde{J}\right)\;=\; \sup_{x,y\in \widetilde{J}} \left|f(x)-f(y)\right|.
\end{equation}
Let finally 
\begin{equation}\label{defdi}
\mathfrak{D}(I) 
\end{equation}
be the collection of all those intervals obtained by repeated bisections of the interval $I$, with the requirement that $I \in \mathfrak{D}(I)$. The function $f$ is \emph{uniformly oscillating} with exponent $\beta \in (0, 1]$ if, for every $J \in \mathfrak{D}(I)$, 
\begin{equation}\label{asympsfJk}
S_f(J,k)\;\asymp\; \left(\frac{\left|J\right|}{2^k}\right)^{\beta} \qquad \textrm{as }  k \rightarrow \infty,
\end{equation}
where,  here and throughout, $\left|J\right|$ denotes the length of the interval $J$,  and where the implicit constant may depend on $f$ and $J$, but not on $k$.  \\

\noindent As proved in~\cite[\S1.4]{APZ}, condition~\eqref{asympsfJk} is enough to guarantee that 
\begin{equation*}
\mathcal{A}_f(\eta)\;\asymp\; \eta^{\beta}.
\end{equation*}
This motivates the following statement claiming 
that the Area Heuristic is satisfied by a large class of monofractal curves~: 

\begin{conj}[Oscillation Principle, \cite{APZ}]\label{conjoscprin}
Assume that the map $f: I\rightarrow \R$  is continuous,  monofractal with exponent $\gamma\in (0, 1)$, and that it is also   uniformly oscillating. Then, the Area Heuristic holds in the sense that there exists an exponent $\alpha>0$ such that $$\mathcal{N}_f(\delta, Q)\;\asymp\; \left(\frac{\delta}{Q}\right)^\gamma\cdot Q^3\qquad \textrm{whenever }\qquad \delta\gg Q^{-\alpha}.$$
\end{conj}

\noindent To be specific, there is a subtle difference between the above statement and the one appearing as Conjecture~1.2 in~\cite[\S1.4]{APZ}; namely, it is not assumed in the above that the monofractal exponent $\gamma$ of the map $f$ equals its uniformly oscillating exponent $\beta$. As claimed without a proof in a footnote in~\cite[\S1.4]{APZ} , this is because these two quantities necessarily coincide~: 

\begin{prop}[Monofractality and Uniform Oscillation]\label{monoandUO}
Assume that the continuous map $f: I\rightarrow \R$ is monofractal with exponent $\gamma\in  [0,1]$ and that it admits  a uniformly oscillating exponent $\beta\in  [0,1]$. Then, these two exponents coincide; that is, $\beta=\gamma$. 
\end{prop}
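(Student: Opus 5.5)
The plan is to prove the two inequalities $\gamma\ge\beta$ and $\beta\ge\gamma$ separately. The first comes from a measure-theoretic (Borel--Cantelli) argument that uses the \emph{upper} bound in~\eqref{asympsfJk}. The second comes from a Baire category argument that uses the \emph{lower} bound in~\eqref{asympsfJk}. Throughout, recall that $h_f(x)=\gamma$ in~\eqref{holdr} means two things (with the convention $\log\omega_f/\log\eta$ and $\log\eta<0$). First, for every $\varepsilon>0$, $\omega_f(x,\eta)\le \eta^{\gamma-\varepsilon}$ for all sufficiently small $\eta$. Second, $\omega_f(x,\eta)\ge\eta^{\gamma+\varepsilon}$ along a sequence $\eta\to0$. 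Hence proving $h_f(x)\ge\beta-\varepsilon$ at a single point $x$ amounts to showing $\omega_f(x,\eta)\le C\eta^{\beta-\varepsilon}$ for \emph{all} small $\eta$.

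\emph{Step 1: $\gamma\ge\beta$.} Apply~\eqref{asympsfJk} with $J=I$. This gives $\sum_{j}\osc(f,I_k^{(j)})\le C\,2^{k}\,2^{-k\beta}$ (absorbing $|I|$ into $C$). Fix $\varepsilon>0$ and call a level-$k$ interval \emph{bad} if its oscillation exceeds $2^{-k(\beta-\varepsilon)}$. By Markov's inequality there are at most $C2^{k}2^{-k\varepsilon}$ bad intervals. So the union of the bad intervals together with their two neighbours has Lebesgue measure $\ll 2^{-k\varepsilon}$, which is summable in $k$. By Borel--Cantelli, almost every $x\in I$ (in particular at least one $x$) satisfies the following for all $k\ge k_0(x)$: the level-$k$ interval containing $x$ and its neighbours are all good. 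Now take $\eta\in(2^{-k-1}|I|,2^{-k}|I|]$. The $\eta$-neighbourhood of $x$ is covered by at most three consecutive level-$k$ intervals, so $\omega_f(x,\eta)\le 3\cdot2^{-k(\beta-\varepsilon)}\ll\eta^{\beta-\varepsilon}$. Hence $h_f(x)\ge\beta-\varepsilon$. Since $h_f(x)=\gamma$, letting $\varepsilon\to0$ gives $\gamma\ge\beta$.

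\emph{Step 2: $\beta\ge\gamma$.} We may assume $\gamma>0$ and fix $0<\varepsilon<\gamma$. For $m\ge1$, set
\[
E_m=\left\{x\in I:\ |f(x)-f(y)|\le |x-y|^{\gamma-\varepsilon}\ \text{for all } y\in I \text{ with } |x-y|\le 1/m\right\}.
\]
By continuity of $f$, each $E_m$ is closed. By monofractality, every $x\in I$ has $\omega_f(x,\eta)\le\eta^{\gamma-\varepsilon}$ for small $\eta$, so $\bigcup_m E_m=I$. This is where the hypothesis that~\eqref{monoexpo} holds at \emph{every} point is essential. By the Baire category theorem, some $E_m$ has nonempty interior, and hence contains some $J\in\mathfrak{D}(I)$. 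Every subinterval $J_k^{(j)}$ of length $\le 1/m$ is then contained in $E_m$. Fixing one of its points $x$, we get $\osc(f,J_k^{(j)})\le 2\,|J_k^{(j)}|^{\gamma-\varepsilon}$. Therefore $S_f(J,k)\le 2(|J|2^{-k})^{\gamma-\varepsilon}$ for $k$ large. Comparing with the lower bound $S_f(J,k)\gg(|J|2^{-k})^{\beta}$ from~\eqref{asympsfJk}, where the implicit constant is allowed to depend on $J$, and letting $k\to\infty$ forces $\beta\ge\gamma-\varepsilon$. Then let $\varepsilon\to0$.

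The main obstacle is Step 2. The monofractal hypothesis only gives pointwise, non-uniform control, and a naive averaging argument fails: the intervals not meeting a large-measure set $E_m$ contribute a term that does not decay in $k$. The Baire argument circumvents this by producing a whole dyadic interval on which the Hölder bound is uniform. It relies on $E_m$ being closed, which I expect to verify directly from continuity of $f$ by passing to limits in the defining inequality.
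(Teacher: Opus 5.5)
Your proposal is correct and follows essentially the same route as the paper: the Baire category argument on the closed sets $E_m$ (which coincide with the paper's $F_N(\gamma,\varepsilon)$) combined with the lower bound in \eqref{asympsfJk} gives $\beta\ge\gamma$, and a Borel--Cantelli argument driven by the upper bound for $J=I$ gives $\gamma\ge\beta$. The only difference is cosmetic: you run the Borel--Cantelli step directly, showing $h_f(x)\ge\beta-\varepsilon$ at almost every $x$ via ``good'' dyadic intervals, whereas the paper argues by contradiction through the sets $E_k(\gamma,\varepsilon)$ with $\limsup_k E_k=I$.
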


\noindent The main purpose of this note is to show that  the Oscillation Principle (Conjecture~\ref{conjoscprin}) holds when its assumptions are strengthened according to this definition~: 

\begin{defi}[Strong Uniform Oscillation]\label{suo} 
The   function  $f: I\rightarrow \R$ is \emph{strongly uniformly oscillating} with exponent $\beta\in (0,1)$ if there exist constants $c_f, C_f, \eta_f>0$ depen\-ding only on  $f$ such that for every subinterval $J\subset I$, 
\begin{equation*}
c_f\cdot \left|J\right|^\beta \;\le\; \osc\!\left(f, J\right)\;\le \; C_f\cdot \left|J\right|^\beta \qquad \textrm{whenever}\qquad 0<\left|J\right|\le \eta_f.
\end{equation*}
Here, $\osc\!\left(f, J\right)$ is the quantity  defined in~\eqref{defosc}. 
\end{defi} 

\noindent In fact, for the considered range of exponents $\beta\in(0,1)$, the Strong Uniform Oscillation
property is equivalent to the concept of strong mono--H\"older regularity appearing in the Multifractal Analysis literature --- see,
e.g.,~\cite{CN}. \\

\noindent Clearly, a strongly uniformly oscillating map is, in particular,  continuous and nowhere differentiable. As shown in the following statement, this concept is a genuine strengthe\-ning of that   uniform oscillation~:  

\begin{prop}[Strong Uniform Oscillation implies  Uniform Oscillation and Monofractality]\label{SUO=>UOandM}
If the  map $f: I\rightarrow \R$ is strongly uniformly oscillating with exponent $\beta\in (0,1)$, then it is both  uniformly oscillating and monofractal  with the same exponent $\beta$. 
\end{prop}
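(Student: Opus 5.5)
Both conclusions follow directly from the two-sided bound $c_f|J|^\beta\le \osc(f,J)\le C_f|J|^\beta$ of Definition~\ref{suo}, applied to small intervals. I treat uniform oscillation first and monofractality second.

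\emph{Uniform oscillation.} Fix $J\in\mathfrak{D}(I)$ and let $k$ be large enough that $|J|/2^k\le \eta_f$. Every subinterval $J^{(j)}_k$ then has length $|J|2^{-k}\le\eta_f$, so Definition~\ref{suo} gives
\[
c_f\left(\frac{|J|}{2^k}\right)^{\beta}\;\le\;\osc\!\left(f,J^{(j)}_k\right)\;\le\; C_f\left(\frac{|J|}{2^k}\right)^{\beta}\qquad(1\le j\le 2^k).
\]
Averaging over $j$ in~\eqref{sfJk} yields exactly~\eqref{asympsfJk}, with constants $c_f,C_f$ that do not depend on $k$ or on $J$. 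The finitely many small values of $k$ are irrelevant, since~\eqref{asympsfJk} is only an asymptotic statement as $k\to\infty$.

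\emph{Monofractality.} Fix $x\in I$ and $0<\eta\le\eta_f/2$. The first step is to compare $\omega_f(x,\eta)$ with oscillations over intervals.

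For the upper bound, note that $\omega_f(x,\eta)\le \osc(f,[x-\eta,x+\eta]\cap I)$. The interval $[x-\eta,x+\eta]\cap I$ has length at most $2\eta\le\eta_f$, so this oscillation is at most $C_f(2\eta)^\beta$.

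For the lower bound, choose a subinterval $J\subset I$ with $x\in J$ and $|J|=\eta$. Such a $J$ exists once $\eta<|I|$, since $I$ has nonempty interior; one takes $J$ to extend to one side of $x$ if $x$ lies near an endpoint. Every point $y\in J$ satisfies $|x-y|\le\eta$, and for any $y,z\in J$ we have $|f(y)-f(z)|\le|f(y)-f(x)|+|f(x)-f(z)|$. Taking the supremum, and noting that points $y=x$ contribute nothing, gives
\[
\osc(f,J)\le 2\,\omega_f(x,\eta),
\qquad\text{hence}\qquad
\omega_f(x,\eta)\ge \tfrac{c_f}{2}\,\eta^\beta .
\]
In particular $\omega_f(x,\eta)>0$, so $\log\omega_f(x,\eta)$ is finite.

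The final step is to take logarithms. For $\eta<1$ we have $\log\eta<0$, so dividing reverses the inequalities:
\[
\beta+\frac{\log(2^\beta C_f)}{\log\eta}\;\le\;\frac{\log\omega_f(x,\eta)}{\log\eta}\;\le\;\beta+\frac{\log(c_f/2)}{\log\eta}.
\]
Letting $\eta\to0^+$, both bounds tend to $\beta$. Hence the $\liminf$ in~\eqref{holdr}, which is in fact a limit here, equals $\beta$ for every $x$. So $h_f\equiv\beta$ and $f$ is monofractal with exponent $\gamma=\beta$.

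The only point requiring care is the lower bound on $\omega_f$ at points near the boundary of $I$, which is handled by choosing the one-sided interval $J$. Since $\gamma=\beta$ is obtained directly, no appeal to Proposition~\ref{monoandUO} is needed, though the result is consistent with it.
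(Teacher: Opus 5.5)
Your proof is correct and follows essentially the same route as the paper. Uniform oscillation comes from applying the two-sided bound of Definition~\ref{suo} to each of the $2^k$ pieces, and monofractality from the sandwich $\omega_f(x,\eta)\asymp\eta^\beta$, whose lower bound comes from the triangle inequality on an interval of length $\eta$ inside $I\cap[x-\eta,x+\eta]$; your version merely makes explicit what the paper leaves implicit (the constraint $\eta\le\eta_f$, the endpoints of $I$, and the final passage to the logarithmic limit).
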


\noindent That being said, the concept of Strong Uniform Oscillation remains strictly stronger than that of Uniform Oscillation as introduced in~\eqref{asympsfJk}.  An explicit example of a uniformly but not strongly uniformly oscillating map is an almost sure realisation $\mathcal{B}=\left(B_t\right)_{t\ge 0}$ of Brownian motion~: its uniform oscillation property is  indeed established in~\cite[\S1.4]{APZ}. However,  since from Levy's  Modulus of Continuity Theorem~\cite[27, Theorem~1.14 \& Remark~1.18]{MP}, 
one has that almost surely,
\begin{equation*}
\sup_{\stackrel{J\subset I}{0<\left|J\right|\le h}}\osc\!\left(\mathcal{B}, J\right)\;\asymp\; \sqrt{h\cdot\log\left(\frac{1}{h}\right)}\qquad \textrm{as }\qquad h\rightarrow 0^+, 
\end{equation*}
it is an easy exercise to check that  the conditions of Definition~\ref{suo} are not met. \\

\noindent The main result of this note reads as follows~: 

\begin{thm}[Strong Oscillation Principle]\label{mainthm}
Let  $f: I\rightarrow \R$ be a strongly uniformly oscilla\-ting function with exponent $\gamma\in (0,1)$. Fix any real 
\begin{equation}\label{rangealpha}
0\;<\;\alpha\;<\;\frac{1-\gamma}{\gamma},
\end{equation}
and let $\delta: \N\rightarrow (0, 1/4]$ be any map meeting the lower bound
\begin{equation}\label{lowboundalp}
\delta\!\left(Q\right)\;\gg\; Q^{-\alpha}.
\end{equation}
Then, the Area Heuristic holds in the sense that,   as $Q$ tends to infinity, $$\mathcal{N}_f(\delta(Q), Q)\;\asymp\; \left(\frac{\delta(Q)}{Q}\right)^\gamma\cdot Q^3.$$
\end{thm}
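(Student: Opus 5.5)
We count, for each denominator $q\le Q$, the rational points $(p/q,r/q)$ in the tube $\mathfrak{I}_f(\delta/Q)$, and sum over $q$. Put $\eta=\delta/Q$. For fixed $q$, split $I$ into the consecutive intervals $[p/q,(p+1)/q]$. A point $(p/q,r/q)$ lies in the tube exactly when $r/q$ is within $\eta$ of the range of $f$ on $[p/q-\eta,p/q+\eta]$; call this range $f([p/q-\eta,p/q+\eta])$. Since $\eta\le 1/(4Q)<1/q$, these windows are disjoint for distinct $p$. By Definition~\ref{suo}, for $Q$ large the range of $f$ on such a window is an interval (by continuity) of length between $c_f(2\eta)^\gamma$ and $C_f(2\eta)^\gamma$. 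Hence the set of admissible ordinates is an interval of length $L_{p,q}\asymp \eta^\gamma$, because $\eta^\gamma\gg\eta$. The number of $r$ with $r/q$ in it is $qL_{p,q}+O(1)$.

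Summing over the $\asymp q|I|$ admissible values of $p$ gives
\begin{equation*}
\#\{(p,r)\}\;=\;\sum_p \bigl(qL_{p,q}+O(1)\bigr)\;\asymp\; q^2\eta^\gamma\;+\;O(q).
\end{equation*}
The main term dominates provided $q\eta^\gamma\gg 1$. Summing $q^2\eta^\gamma$ over $q\le Q$ then gives $\asymp Q^3\eta^\gamma=(\delta/Q)^\gamma Q^3$, which is the claim. The error contributes $\sum_{q\le Q}O(q)=O(Q^2)$, and this is negligible against $Q^3(\delta/Q)^\gamma=Q^{3-\gamma}\delta^\gamma$ exactly when $\delta\gg Q^{-(1-\gamma)/\gamma}\cdot(\text{small})$. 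This is guaranteed by \eqref{lowboundalp} with $\alpha<(1-\gamma)/\gamma$, since then $Q^{2}/(Q^{3-\gamma}\delta^\gamma)\ll Q^{\gamma-1+\alpha\gamma}\to0$. This explains the range \eqref{rangealpha}.

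For the lower bound I will not use all $q$. It suffices to restrict to $Q/2<q\le Q$, where $q\eta^\gamma\asymp Q^{1-\gamma}\delta^\gamma\gg Q^{1-\gamma-\alpha\gamma}\to\infty$. There each interval of length $\ge c\eta^\gamma$ contains $\ge \tfrac12 qc\eta^\gamma$ points $r/q$ once $Q$ is large. For the upper bound, the $O(1)$ term is uniformly bounded by $1$ per $p$, so the small $q$ are handled by the $O(Q^2)$ estimate above. Multiplicity is harmless: the count is over triples $(p,q,r)$, and I have counted exactly those.

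The main obstacle is the boundary bookkeeping, and there are two issues. First, the tube is defined through $\inf_x\max\{|x-s|,|f(x)-t|\}$. So the condition for $(p/q,r/q)$ is precisely that $r/q\in f([p/q-\eta,p/q+\eta]\cap I)+[-\eta,\eta]$. I need to check that this set is an interval, which follows from continuity. Second, near the endpoints of $I$ the window is truncated. Its length is then still between $\eta$ and $2\eta$, so the two-sided oscillation bounds still apply, and only $O(1)$ values of $p$ per $q$ are affected anyway. The uniformity of the constants $c_f,C_f$ in Definition~\ref{suo} is exactly what makes each of the $\asymp q$ columns contribute comparably. Without it, as for Brownian motion, the argument breaks.
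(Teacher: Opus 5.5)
Your proof is correct and follows essentially the same route as the paper. For each column $x=p/q$, the admissible ordinates form an interval whose length is the oscillation of $f$ on $I\cap[p/q-\eta,p/q+\eta]$ plus $2\eta$, hence $\asymp\eta^\gamma$; the count $q|V|\pm1$ per column gives an $O(Q^2)$ error that the condition $\alpha<(1-\gamma)/\gamma$ makes negligible; and the lower bound is obtained by restricting to $Q/2<q\le Q$.

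The one inaccuracy is harmless. For $p/q$ just outside $I$, the truncated window may be shorter than $\eta$, not ``between $\eta$ and $2\eta$''. There only the upper bound is needed, which follows from monotonicity of the oscillation, and the lower bound only uses the $\gg q$ values of $p$ with $p/q\in I$ (the paper uses an interior subinterval $I_*$ for the same purpose).
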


\noindent This theorem enables one to confirm the validity of  the Oscillation Principle (Conjecture~\ref{conjoscprin}) for a large class of monofractal curves. That being said, the example of an almost sure realisation of Brownian motion, which meets both the assumptions of the conjecture (as noted above) and its conclusion from the Second Main Theorem in~\cite{APZ}, shows that establishing the full conjecture has the potential  to capture a ge\-nui\-nely larger class of monofractal curves. \\

\noindent The proof that an almost sure realisation of Brownian motion falls within the scope of the Oscillation Principle is achieved in~\cite{APZ} thanks to the second moment method in Probability Theory. It is nevertheless left  as an open problem therein to establish the conjecture in the case of a \emph{deterministic} monofractal curve. Theorem~\ref{mainthm} enables one to precisely achieve this goal for the following families of fractal curves defined with the help of reals 
\begin{equation*}
0\;<\;\frac{1}{b}\;<\;a\;<\;1
\end{equation*}
and of an auxiliary sequence of real numbers $\bm{\varphi}=(\varphi_m)_{m\ge 0}$~:

\begin{enumerate}
\item the \emph{Inhomogeneous Takagi–van der Waerden Function} 
$$T_{a,b}^{\bm{\varphi}}~: x\in\R\;\mapsto\; \sum_{m=0}^{\infty} a^m\cdot\left\|b^mx+\varphi_m\right\|.$$ Here, one has set $\left\|t\right\|=\textrm{dist}(t,\Z)$ when  $t\in\R$. It is indeed established in~\cite[Theorem~9.2.1 \& Remark~8.5.2(e)]{JP} that $T_{a,b}^{\bm{\varphi}}$ meets the strong uniform oscillation property with the exponent
\begin{equation}\label{gammaabgen}
\gamma(a,b)
\;=\;
-\frac{\log a}{\log b}
\;\in\;(0,1).
\end{equation}

\item the \emph{Inhomogeneous Weierstrass Function} 
\begin{equation*}
W_{(a,b)}^{\bm{\varphi}}\;:\; x\in I\;\mapsto\;  \sum_{m=0}^{\infty}a^m\cos\!\left(2\pi\cdot\left( b^m x+\varphi_m\right)\right).
\end{equation*} 
It is indeed easily deduced from~\cite[ Theorem 8.3.1]{JP} that $W_{(a,b)}^{\bm{\varphi}}$ meets the strong uniform oscillation property with the same exponent as in~\eqref{gammaabgen}.

\item the \emph{Generalised Inhomogeneous Weierstrass Function} 
defined with the help of a non-constant, $1$--periodic Lipschitz continuous function $g:\R\rightarrow\R$ as 
\begin{equation*}
W_{(a,b)}^{\bm{\varphi}, g}\;:\; x\in I\;\mapsto\;  \sum_{m=0}^{\infty} a^m g\!\left(b^m x+\varphi_m\right).
\end{equation*} 
The situation is here more subtle and incomplete. Indeed, while the upper bound $$\osc\!\left(W_{g,(a,b)}^{\bm{\varphi}},J\right)
\;\ll\;
|J|^{\gamma(a,b)}$$ is readily established for  every sufficiently small interval $J\subset I$ from the Lipschitz continuity of the map $g$, the matching lower bound is only known to hold  conditionally in two cases~: (a) when $b>1$ is transcendental --- this is due to Heurteaux~\cite{He} and (b) when the inhomogeneous phase $\bm{\varphi}$  vanishes identically whereas the auxiliary map $x\mapsto \sum_{m\in\Z}a^m \cdot\left(g(b^mx)-g(0)\right)$ is nonzero --- this is due to Hu and Lau~\cite[Theorem~4.1]{HL}. 
\end{enumerate}

\noindent This list is, of course, not meant to be exhaustive. For instance, the so-called class of Kiesswetter-type functions defined and analysed in~\cite[Remark~5.3.12.]{JP} provides another family of strongly uniformly oscillating maps. \\

\noindent From the point of view of Multifractal Analysis, Proposition~3 in~\cite{CN} furnishes a sufficient wavelet criterion for a map to be strongly uniformly oscillating. This criterion is not tied to any particular series representation and is especially useful to construct a large class of functions the Strong Oscillation Principle (Theorem~\ref{mainthm}) applies to. \\

\noindent Furthermore, Theorem~1 in~\cite{CN} shows that among the functions which are H\"older continuous with some exponent $\gamma\in (0,1)$, those which have the strong uniform oscillation property are prevalent (i.e.~"large" in a suitable sense --- see the definition of this concept \emph{ibid.}). Functions meeting the Strong Oscillation Principle (Theorem~\ref{mainthm}) are thus also prevalent in this class of H\"older continuous functions.

\paragraph{Organisation of the paper.} The properties of a (strongly) uniformly oscillating  map stated in Propositions~\ref{monoandUO} and~\ref{SUO=>UOandM} are established in Section~\ref{sec2}. Theorem~\ref{mainthm} concerned with the Strong Oscillation Principle is proved in Section~\ref{sec3}. 

\paragraph{Acknowledgments.} The first--named author's   work was supported by the French \emph{Agence Nationale de la Recherche} through grant ANR-25-CE40-1961-01. The other two co-authors were supported by EPSRC grant UKRI 2768. \color{black}

\section{Properties of (Strongly) Uniformly Oscillating Functions}\label{sec2}

\subsection{Monofractality and Uniform Oscillation}

\noindent This subsection is devoted to the proof of Proposition~\ref{monoandUO}. To this end, recall that $\beta\in [0,1]$ denotes the uniformly oscillating exponent of the continuous map $f:I\rightarrow \R$ (as defined in~\eqref{asympsfJk}) and $\gamma \in  [0,1]$ its monofractal exponent (as defined in~\eqref{monoexpo}). The proof is achieved in two steps.

\begin{proof}[Proof of the inequality $\gamma\le \beta$]
From the assumption of monofractality and from the de\-fi\-ni\-tion of the oscillation quantity $ \omega_f(x,\eta)$  in~\eqref{defomega}, for any $\varepsilon>0$ and any $x\in I$, there exists $\eta_0=\eta_0(x, \varepsilon)>0$ such that for all $\eta\in\left(0, \eta_0\right)$, one has $ \omega_f(x,\eta)\le \eta^{\gamma-\varepsilon}$. As a consequence, the interval $I$ can be decomposed as $$I\;=\; \bigcup_{N=1}^\infty F_N(\gamma, \varepsilon),$$ where given an integer $N\ge 1$, $$F_N(\gamma, \varepsilon)\;=\; \left\{x\in I\;:\; \omega_f(x,\eta)\le \eta^{\gamma-\varepsilon}\quad \textrm{for all}\quad 0<\eta\le \frac{1}{N}\right\}.$$

\noindent The continuity of the function $f$ implies that each set $F_N(\gamma, \varepsilon)$ is closed. It then follows from the Baire Category Theorem that there exists an index $N_0\ge 1$ such that $F_{N_0}(\gamma, \varepsilon)$ has nonempty interior. As a consequence, there exists a dyadic interval $J\in\mathcal{D}(I)$ contained in $F_{N_0}(\gamma, \varepsilon)$ (where the collection of intervals $\mathcal{D}(I)$ is defined in~\eqref{defdi}) and some parameter $\eta'>0$ such that for all $x\in J$ and all $\eta\in (0, \eta')$, one has that $$ \omega_f(x,\eta)\;\le\; \eta^{\gamma-\varepsilon}.$$

\noindent Fix an integer $k\ge 1$. Adopting the notation of the introduction, denote by $J^{(1)}_{k}, \dots, J^{\left(2^k\right)}_{k}$  the partition of the interval $J$  into $2^k$ closed subintervals of equal length $$\eta_k\; :=\; \frac{\left|J\right|}{2^k}$$ 
(up to boundary points). Since $J\subset F_{N_0}(\gamma, \varepsilon)$, given $x,y\in J^{(\ell)}_{k}$ (where $1\le \ell\le 2^k$), one has  that $$\left|f(x)-f(y)\right|\;\underset{\eqref{defomega}}{\le}\;  \omega_f(x,\eta_k)\;\le\; \eta_k^{\gamma-\varepsilon}.$$ In particular, $\osc\!\left(f, J^{(\ell)}_{k}\right)\le \eta_k^{\gamma-\varepsilon}$ for every $1\le \ell\le 2^k$ in such a way that 
\begin{equation*}
S_f\!\left(J,k\right)\;\underset{\eqref{sfJk}}{\le}\;\eta_k^{\gamma-\varepsilon}.
\end{equation*}
Also, the assumption of uniform oscillation yields the existence of a constant $c_J>0$ such that for all $k\ge 1$, $$S_f\!\left(J,k\right)\;\ge\; c_J\cdot \eta_k^\beta.$$ Thus, $ c_J\cdot \eta_k^\beta\le \eta_k^{\gamma-\varepsilon}$. Since, clearly, $\eta_k\rightarrow 0$ as $k\rightarrow \infty$, one obtains that $\beta\ge \gamma-\varepsilon$. Upon letting $\varepsilon$ tend to zero, this yields the sought inequality $\beta\ge \gamma.$
\end{proof}

\vspace{2mm}

\begin{proof}[Proof of the inequality $\gamma\ge \beta$] Assume for a contradiction that 
$\beta> \gamma$
and, given an integer $k\ge 1$, set in this proof  
\begin{equation}\label{deetak}
\eta_k\;=\; \frac{\left|I\right|}{2^k}\cdotp  
\end{equation}
Since by assumption, $h_f(x)=\gamma$ for all $x\in I$, there exist infinitely many indices $k\ge 1$ such that $$\omega_f\!\left(x, \eta_k\right)\;\ge\; \eta_k^{\gamma+\varepsilon}$$ for any given small enough value of $\varepsilon\in \left(0, \beta-\gamma\right)$. (To see this,  note that a sequence $\left(\delta_i\right)_{i\ge 1}$ tending to 0 such that $\omega_f\!\left(x, \delta_i\right)\;\ge\; \delta_i^{\gamma+\varepsilon}$ for all $i\ge 1$ exists by the definition of the quantity $h_f(x)=\gamma$; it then suffices to sandwich each $\delta_i$ between two consecutive dyadic scales determined by the sequence $\left(\eta_k\right)_{k\ge 1}$, to rely on the monotonicity of the map $\eta\mapsto \omega_f\!\left(x, \eta\right)$ and to adjust if needed the value of $\varepsilon>0$.)\\

\noindent Let then 
\begin{equation*}
E_k\!\left(\gamma, \varepsilon\right)\;=\; \left\{x\in I\;:\; \omega_f\!\left(x, \eta_k\right)\;\ge\; \eta_k^{\gamma+\varepsilon}\right\}.
\end{equation*}
 The above discussion shows that 
 \begin{equation}\label{limsupe_kI}
 \limsup_{k\rightarrow\infty} E_k\!\left(\gamma, \varepsilon\right)\;=\; I.
 \end{equation} 
 Let  $I^{(1)}_{k}, \dots, I^{\left(2^k\right)}_{k}$ denote the partition (up to boundary points) of the interval $I$  into $2^k$ closed subintervals of equal length $\eta_k$ (as defined in~\eqref{deetak}). Set for the sake of simplicity of notation 
 \begin{equation*}
 \sigma_k(\ell)\;=\; \osc\!\left(f, I^{(\ell)}_{k}\right)
 \end{equation*}
when $1\le \ell\le 2^k$. If $x\in I^{(\ell)}_{k}$ and $\left|x-y\right|\le \eta_k$, then it should be clear that $y$ belongs to one of the three intervals $I^{(\ell-1)}_{k}$, $I^{(\ell)}_{k}$ or $I^{(\ell+1)}_{k}$, whenever the indices are well--defined. As a consequence, by the Triangle Inequality, 
\begin{equation*}
\omega_f\!\left(x, \eta_k\right)\;\le\;   \sigma_k(\ell-1)\;+\;  \sigma_k(\ell)\;+\;  \sigma_k(\ell+1),
\end{equation*}
where, conventionally,  the quantities vanish when the corresponding indices are not well--defined. This shows that when $x\in I^{(\ell)}_{k}\cap E_k\!\left(\gamma, \varepsilon\right)$, one has that $$\max_{\ell-1\le p\le \ell+1}\left\{\sigma_k(p)\right\}\;\ge\; \frac{\eta_k^{\gamma+\varepsilon}}{3}\cdotp $$

\noindent Upon setting $$\mathfrak{B}_k\;=\; \left\{1\le \ell\le 2^k\; : \;  \sigma_k(\ell)\;\ge\; \frac{\eta_k^{\gamma+\varepsilon}}{3}\right\},$$ one thus obtains that the set $E_k\!\left(\gamma, \varepsilon\right)$ is contained in the union of the intervals indexed by $\mathfrak{B}_k$ and their immediate neighbours, whence the bound
\begin{equation}\label{blaaa}
\left|E_k\right|\;\le\; 3\cdot \eta_k\cdot \#\mathfrak{B}_k.
\end{equation}
Since, clearly, 
\begin{equation*}
\frac{1}{3}\cdot \eta_k^{\gamma+\varepsilon}\cdot \#\mathfrak{B}_k\;\le\; \sum_{\ell=1}^{2^k}  \sigma_k(\ell)\;=\; 2^k\cdot S_f\!\left(I, k\right),
\end{equation*}
it follows that $$ \#\mathfrak{B}_k\;\le\; 3\cdot \eta_k^{-\left(\gamma+\varepsilon\right)}\cdot 2^k\cdot S_f\!\left(I, k\right),$$ and therefore that $$\left|E_k\right|\;\underset{\eqref{blaaa}}{\le}\; 9\cdot \eta_k^{1-\gamma-\varepsilon}\cdot 2^k\cdot S_f\!\left(I, k\right)\;=\; 9\cdot\left|I\right|\cdot \eta_k^{-\left(\gamma+\varepsilon\right)}\cdot S_f\!\left(I, k\right),$$ where the last equation uses the relation 
\begin{equation}\label{relki}
2^k\cdot\eta_k=\left|I\right|.
\end{equation}

\noindent The uniform oscillation assumption implies that $S_f\!\left(I, k\right)\ll \eta_k^{\beta}$  and yields that $\left|E_k\right|\ll \eta_k^{\beta-\gamma-\varepsilon}$. As a consequence, taking into account identity~\eqref{relki} and the inequality $\beta-\gamma-\varepsilon>0$, $$\sum_{k=1}^{\infty}\left|E_k\right|\;\ll\; \sum_{k=1}^{\infty}2^{-k\left(\beta-\gamma-\varepsilon\right)}\;<\; \infty.$$ From the Borel--Cantelli lemma, $\limsup_{k\rightarrow\infty}E_k$ happens to be null set, which contradicts~\eqref{limsupe_kI} (recall that the interval $I$ is throughout assumed to have nonempty interior). This concludes the proof of the sought complementary inequality $\gamma\ge \beta$. 
\end{proof}

\noindent The proof of Proposition~\ref{monoandUO} is complete.

\subsection{Strongly Uniformly Oscillating Maps}

\noindent This subsection is devoted to the proof of Proposition~\ref{SUO=>UOandM}. 

\begin{proof}
Let $x\in I$ and $\eta>0$. From the definitions of the local H\"older exponent $h_f(x)$ in~\eqref{holdr} and of the measure of local oscillation $\omega_f(x,\eta)$ in~\eqref{defomega}, it is clear that the strong uniform oscillation property stated in Definition~\ref{suo} implies that $\omega_f(x,\eta)\ll\eta^{\gamma}$. To show the monofractality with exponent $\gamma$ of the map $f$, one needs to establish the complementary bound $\omega_f(x,\eta)\gg\eta^{\gamma}$.\\

\noindent To this end, fix an subinterval $J(\eta)\subset I\cap \left[x-\eta, x+\eta\right]$ with length $\eta$. From the lower bound in  the definition of the property of strong uniform oscillation (Definition~\ref{suo}), there exist $u,v\in J(\eta)$ such that $\left|f(u)-f(v)\right|\gg \eta^\gamma$. As a consequence,  from the Triangle inequality, $$\omega_f(x,\eta)\;\ge\; \max\left\{\left|f(v)-f(x)\right|, \left|f(u)-f(x)\right|\right\}\;\gg\; \eta^\gamma.$$
This shows that $\omega_f(x,\eta)\asymp \eta^\gamma$ and thus that the map $f$ is indeed monofractal with exponent $\gamma$ under the assumption of strong uniform oscillation with exponent $\gamma.$\\

\noindent To prove that it is also uniformly oscillating with exponent $\gamma$, adopt the notation of the introduction and let $J\in\mathfrak{D}(I)$. Applying the definition of strong uniform oscillation to the $2^k$ subintervals $J^{(1)}_{k}, \dots, J^{\left(2^k\right)}_{k}$ (each with length $\left|J\right|/2^k$), one immediately obtains that the quantity $S_f\!\left(J,k\right)$ defined in~\eqref{sfJk}  satisfies the relation $S_f\!\left(J,k\right)\asymp \left(\left|J\right|/2^k\right)^\gamma$, whence the property of uniform oscillation. This completes the proof of Proposition~\ref{SUO=>UOandM}.
\end{proof}

\section{Proof of the Strong Oscillation Principle}\label{sec3}

\noindent This section is devoted to the proof of the Main Theorem~\ref{mainthm}.

\begin{proof} Set in this proof 
\begin{equation}\label{defetaQ}
\eta_Q\;=\; \frac{\delta(Q)}{Q}\cdotp
\end{equation} 
Given integers $1\le q\le Q$ and $p\in\Z$, consider the vertical section 
\begin{equation*}
V_{p,q}\!\left(\eta_Q\right)\;=\; \left\{y\in\R\; :\; \left(\frac{p}{q},y\right)\in \mathfrak{I}_f\!\left(\eta_Q\right)\right\},
\end{equation*}
where the tubular neighbourhood $ \mathfrak{I}_f\!\left(\eta_Q\right)$ is defined in~\eqref{tubnegh}. For the section $V_{p,q}\!\left(\eta_Q\right)$ to be nonempty, if is clearly necessary that 
\begin{equation}\label{interI}
\dist\!\left(\frac{p}{q}, I\right)\;\le\; \eta_Q. 
\end{equation}
When this condition holds, let 
\begin{equation*}
K_{p,q}\!\left(\eta_Q\right)\;=\; I\cap\left[\frac{p}{q}-\eta_Q, \; \frac{p}{q}+\eta_Q\right]. 
\end{equation*}
Since the image of the interval $K_{p,q}\!\left(\eta_Q\right)$ by the continuous map $f$ is again an interval, it should be clear that 
\begin{equation}\label{lenV*}
\left|V_{p,q}\!\left(\eta_Q\right)\right|\; \underset{\eqref{defosc}}{=}\; \osc\!\left(f, K_{p,q}\!\left(\eta_Q\right)\right) + 2\cdot \eta_{Q}.
\end{equation}
It then follows from the property of strong uniform oscillation (Definition~\ref{suo}) that when $Q\ge 1$ is large enough, 
\begin{equation}\label{boundV}
\left|V_{p,q}\!\left(\eta_Q\right)\right|\; \ll\; \left| K_{p,q}\!\left(\eta_Q\right)\right|^\gamma + \eta_{Q} \;\ll\; \eta_Q^\gamma.
\end{equation}
Since for a given integer $q\ge 1$, the number of integers $p$ meeting condition~\eqref{interI} is $O\!\left(q+1\right)$ (where the implicit constant is allowed to depend on the length of $I$) and since, given an interval $V\subset\R$, 
\begin{equation}\label{countVq}
q\left|V\right| -1\;\le\; \#\left\{r\in\Z\;:\; \frac{r}{q}\in V\right\}\;\le\; q\left|V\right| +1,
\end{equation}
the upper bound in~\eqref{boundV} implies  that 
\begin{equation}\label{boundsVpq}
\mathcal{N}_f(\delta, Q)\;\ll\; \sum_{q=1}^Q\left(q+1\right)\cdot\left(q\cdot\eta_Q^\gamma+1\right)\;\ll\; \eta_Q^\gamma Q^3+Q^2\;\underset{\eqref{defetaQ}}{\ll}\; \left(\frac{\delta(Q)}{Q}\right)^\gamma\cdot Q^3.
\end{equation}
This last inequality is justified as soon as $Q^2=o\left(\eta_Q^\gamma Q^3\right)$ when $Q$ tends to infinity. To see that this relation holds, it suffices to note that 
\begin{equation}\label{lowbou}
Q\cdot \eta_Q^\gamma\;\underset{\eqref{defetaQ}}{=}\; \delta(Q)^\gamma\cdot Q^{1-\gamma}\;\underset{\eqref{lowboundalp}}{\gg}\;Q^{1-\gamma-\alpha\gamma},
\end{equation} 
where the last exponent is, by assumption, positive. \\

\noindent To obtain the lower bound   matching the upper bound~\eqref{boundsVpq},  
fix an interval $I_*$ with nonempty interior lying in the interior of the interval $I$. Provided that $Q\ge 1$ is large enough (as a function of the length of $I_*$), for any fix integer $q$ in the dyadic range $\left[Q/2, Q\right]$, the number of integers $p$ such that $p/q\in I_*$ is, up to a multiplicative constant depending on $I_*$, at least $q$. \\

\noindent Even if it means increasing again the value of $Q$, one can choose an interval $$J_{p,q}\;\subset\; K_{p,q}\!\left(\eta_Q\right)\;=\; I\cap \left[\frac{p}{q}-\eta_Q; \, \frac{p}{q}+\eta_Q\right] $$ with exact length $\eta_Q$. Then, from the definition of the property of strong uniform oscillation,
\begin{equation}\label{lenV*bis}
\left|V_{p,q}\!\left(\eta_Q\right)\right|\; \underset{\eqref{lenV*}}{\ge}\; \osc\!\left(f, J_{p,q}\!\left(\eta_Q\right)\right) + 2\cdot \eta_{Q}\;\gg\;\eta_Q^\gamma.
\end{equation}
Note also that the quantity $Q\cdot \eta_Q^\gamma$ tends to infinity with $Q$ from the lower bound~\eqref{lowbou}.  As a consequence, for any integer $q\in \left[Q/2, Q\right]$, the lower bound in~\eqref{countVq} yields that
\begin{equation*}
 \#\left\{r\in\Z\;:\; \frac{r}{q}\in V_{p,q}\!\left(\eta_Q\right) \right\}\; \gg\; q\cdot \eta_Q^\gamma. 
\end{equation*}
Summing over the range of admissible values of $p$ and $q$, one thus obtains that 
\begin{equation*}
\mathcal{N}_f(\delta, Q)\;\gg\; \eta_Q^{\gamma} \sum_{Q/2\le q\le Q} q^2\;\gg\; \eta_Q^\gamma \cdot Q^3\;\underset{\eqref{defetaQ}}{=}\;  \left(\frac{\delta(Q)}{Q}\right)^\gamma\cdot Q^3.
\end{equation*}
This lower bound and the corresponding upper bound~\eqref{boundsVpq} complete the proof of Theorem~\ref{mainthm}.
\end{proof}

\bibliographystyle{unsrt}
\addcontentsline{toc}{section}{References}

\end{document}